\newtheorem{thrm}{Theorem}
\title[Additional first order equation for infinitesimal bendings of surfaces]
{Additional first order equation for infinitesimal bendings of smooth surfaces 
in the isothermal coordinates}
\author{Victor Alexandrov}
\address{Sobolev Institute of Mathematics, Koptyug ave., 4, Novosibirsk, 
630090, Novosibirsk, Russia and Department of Physics, Novosibirsk State 
University, Pirogov str., 2, Novosibirsk, 630090, Russia
\newline 
\indent 
\href{https://orcid.org/0000-0002-6622-8214}{\rm{ORCID iD: 0000-0002-6622-8214}}}
\email{alex@math.nsc.ru
\newline {}
\newline {}
\newline {$$*\quad *\quad *$$}
\newline {}
\textrm{\textbf{Journal reference for the article based on this preprint:}
{\sl Alexandrov V. A.} Additional first-order equation for infinitesimal bendings of
smooth surfaces in the isothermal coordinates. Siberian Math. J. {\bf 66}, no. 3, 
618--628 (2025). 
DOI: \href{https://doi.org/10.1134/S0037446625030024}{10.1134/S0037446625030024}.}}
\begin{document}

\begin{abstract}
The article contributes to the theory of infinitesimal bendings of smooth 
surfaces in Euclidean 3-space.
We derive a first-order linear differential equation, which previously 
did not appear in the literature and which is satisfied by any Darboux 
rotation field of a smooth surface.
We show that, for some surfaces, this additional equation is functionally 
independent of the three standard equations that the Darboux rotation 
field satisfies (and by which it is determined).
As a consequence of this additional equation, we prove the maximum 
principle for the components of the Darboux rotation field for a class 
of disk-homeomorphic surfaces containing not only surfaces of positive 
Gaussian curvature.
\par
\textit{Keywords}: 
Euclidean 3-space, surface in Euclidean space, infinitesimal bending of a 
surface, Darboux rotation field, isothermal coordinates, elliptic partial 
differential equation, maximum principle.
\par
\textit{UDC}: 514.7 	
\par
\textit{MSC}: 53C24, 53A05, 53C18, 52C25 
\end{abstract}
\maketitle

\section*{\S\,1. Introduction}
The problems of  existence of isometric deformations and infinitesimal 
isometric deformations of surfaces in $\mathbb{R}^3$ arose in the works 
of Euler [1] and Gauss [2] simultaneously with the creation of differential 
geometry of surfaces.
The study of these problems has led to numerous profound results.
There are a vast number of books and articles that explore various aspects 
of these problems.
We mention only the fundamental survey article [3], the list of references in 
which contains 291 titles, and the book [4], the last one known to us, where 
the presentation of the theory of bending of surfaces starts from the very 
beginning, is conducted in all details, and contains all the issues necessary 
to understand this article.
Starting from them, the reader will be able to become familiar with the 
history of studying the theory of bending of surfaces, and with the main 
results of this theory.

The main results of the article are contained in Theorems~1--3.
In Theorem~1, we obtain a new first-order linear differential equation (11) 
not previously encountered in the literature, which the Darboux rotation field
must satisfy.
Examples~1--3 show that for some surfaces this new equation turns into 
the identity $0=0$, while for other surfaces (which can have both positive 
or negative curvature) this is a first-order equation that is linearly 
independent of the three standard first-order equations (8)--(10), which  
the Darboux rotation field satisfies.  
In Theorem~2, as a consequence of (11) we prove that under certain conditions 
the third component $y_3$ of a Darboux rotation field $y$ satisfies two 
homogeneous second-order linear partial differential equations (18) and (19).
We speak of the third component $y_3$ solely for the sake of definiteness, 
since the other two components of $y$ satisfy similar equations.
The existence of two second-order equations (18) and (19) for $y_3$ is a new 
effect that had not previously appeared in the theory of infinitesimal bendings.
The classical result is that the third component $z_3$ of an infinitesimal 
bending $z$ satisfies a single homogeneous second-order linear partial 
differential equation (see, e.g., [5; 6, Ch.~IV, \S\,2]).
Examples 4--6 show that a variety of situations is possible for equations (18)
and (19). 
Namely, in some cases they both degenerate (i.e., are not second-order 
equations); in some other cases, only one of them degenerates; and in some 
other cases, both (18) and (19) are second-order equations that are, moreover, 
functionally independent.
As a consequence of (18), in Theorem~3 we derive a maximum 
principle for $y_3$ for a certain class of disk-homeomorphic surfaces 
containing not only surfaces of positive Gaussian curvature.
For surfaces of positive Gaussian curvature this maximum principle cannot 
claim to be new because it is immediate from [7, Corollary of Theorem~2].

\section*{\S 2. Darboux rotation field}
 
Let
$\{S_t\}_{t\in(-1,1)}$ 
be a smooth family of smooth surfaces in 
$\mathbb{R}^3$.  
In this article, the word ``smooth'' has the usual meaning for classical
differential geometry, i.e., ``differentiable as many times as necessary 
in our reasoning.''
We always assume $S_t$ to be a connected surface, but unless explicitly 
stated, we do not assume other constraints such as compactness or no boundary.
By definition, put 
$S=S_0$.
The family 
$\{S_t\}_{t\in(-1,1)}$ 
is called an {\it isometric deformation} of
$S$,  
if for every 
$t\in(-1,1)$  
there is a smooth one-to-one map 
$f_t:S\to S_t$, . 
which preserves the length of every curve. 
Analytically, the last condition can be expressed as follows: 
for every parameterization
$x: U\subset\mathbb{R}^2 \to S\subset\mathbb{R}^3$ 
of
$S$
and the corresponding parameterization
$x_t\stackrel{\textrm{def}}{=}
f_t\circ x: U\subset\mathbb{R}^2 \to S_t\subset\mathbb{R}^3$ 
of
$S_t$ 
the first fundamental forms of  
$S$ 
and
$S_t$ 
coincide with each other, i.e., for all
$(u,v)\in U$ 
and every
$t\in(-1,1)$ 
we have
\begin{equation*}
\begin{cases}
((x_t)_u, (x_t)_u) = (x_u, x_u), \\ 
((x_t)_u, (x_t)_v) = (x_u, x_v), \\ 
((x_t)_v, (x_t)_v) = (x_v, x_v). 
\end{cases} \eqno(1)
\end{equation*}
Here are the subscripts 
$u$ 
and
$v$  
denotes differentiation with respect to the corresponding variable 
(the derivatives are calculated at the point 
$(u,v)\in U$); 
the subscript 
$t$ 
has a different meaning (it points to a certain surface $S_t$ from the family  
$\{S_t\}_{t\in(-1,1)}$);
finally, 
$(a,b)$ 
stands for the dot product of the vectors
$a,b\in\mathbb{R}^3$.
Relations (1) are nonlinear equations with respect to 
$x_t$. 
One of the classical approaches to solving them is to study their 
linearization, which can be obtained as follows.
We differentiate the equalities (1) by
$t$  
at 
$t=0$ 
and put
$$
z(u,v)\stackrel{\textrm{def}}{=}\frac{dx_t}{dt}\biggr|_{t=0}(u,v).
$$
As a result, we obtain the linear equations
$$
(z_u, x_u)=0, \quad
(z_u, x_v) + (z_v, x_u)=0,  \quad 
(z_v, x_v)=0  \eqno(2)
$$
with respect to the vector-valued function  
$z=z(u,v)$, 
the geometric meaning of which is well known;
namely, $z$ maps each point of
$S$ 
into the velocity vector of the point at the initial moment 
of the isometric deformation
$\{S_t\}_{t\in(-1,1)}$.
Any solution
$z$
to system (2), even if it is not generated by an isometric deformation 
$\{S_t\}_{t\in(-1,1)}$
of 
$S=S_0$, 
is called 
a {\it field of infinitesimal bending} of  
$S$
(or an {\it infinitesimal bending} of  
$S$).

An isometric deformation
$\{S_t\}_{t\in(-1,1)}$  
is {\it trivial}, if there is a smooth family 
$\{P_t\}_{t\in(-1,1)}$ 
of isometries
$P_t:\mathbb{R}^3\to \mathbb{R}^3$  
such that 
$S_t=P_t(S)$
for every
$t\in(-1,1)$. 
Otherwise, the isometric deformation 
$\{S_t\}_{t\in(-1,1)}$ 
is {\it nontrivial}.
A surface is {\it rigid}, if there is a nontrivial isometric deformation of it.
Otherwise, the surface is {\it nonrigid}.
An infinitesimal bending
$z$
of
$S$ 
is {\it trivial} if it can be generated by a trivial isometric deformation
$\{S_t\}_{t\in(-1,1)}$.
Otherwise, the infinitesimal bending is {\it nontrivial}.
A classical result of the theory of bendings of surfaces reads that if there 
is no nontrivial infinitesimal bending 
$z$ 
of 
$S_0$ 
then there is no nontrivial isometric deformation 
$\{S_t\}_{t\in(-1,1)}$ 
of 
$S_0$ 
which is analytic with respect to 
$t$.
The problem of the existence of a compact nonrigid surface without 
boundary is well known to specialists [8], but it remains open to this day.
On the other hand, it is known that for any natural  
$m\geq 1$  
there is a sphere-homeomorphic surface that has exactly 
$m$ 
linearly independent fields of nontrivial infinitesimal bending.
This statement was proved in [9] for $C^{\infty}$ surfaces and
in [10] for real-analytic surfaces.
 
Analytically it is quite difficult to recognize whether a given solution 
$z$
to (2) is a trivial infinitesimal bending or nontrivial. 
This is one of the reasons that in the theory of bendings of surfaces
it is considered standard to replace the vector-valued function 
$z=z(u,v)$  
by a new unknown vector-valued function
$y=y(u,v)$ 
using the formula 
$dz=y\times dx$ 
or, which is the same, by means of the formulas
\begin{gather*}
z_u=y\times x_u, 
                                                                \tag3
 \\ 
z_v=y\times x_v. 
                                                                \tag4
\end{gather*} 
Here 
$\times$ 
stands for the cross product. 
Differentiating (3) by
$v$, 
and (4) by 
$u$
and subtracting the second relation from the first, we obtain the system 
of differential equations
$$
y_u\times x_v =y_v\times x_u, \eqno(5)
$$
or, which is the same,
\begin{equation*}
\begin{cases}
x_{3v}y_{2u}-x_{2v}y_{3u}=x_{3u}y_{2v}-x_{2u}y_{3v},   \\ 
x_{1v}y_{3u}-x_{3v}y_{1u}=x_{1u}y_{3v}-x_{3u}y_{1v},  \\ 
x_{2v}y_{1u}-x_{1v}y_{2u}=x_{2u}y_{1v}-x_{1u}y_{2v}.  
\end{cases}\eqno(6)
\end{equation*}
Every solution 
$y$ 
to (5) is called a {\it Darboux rotation field} of
$S$ 
or of the infinitesimal bending 
$z$.
The name is explained by the fact that if we treat  
$z$ 
in (3)and (4) as an infinitesimal isometric motion of the tangent plane of
$S$, 
then
$y$ 
is the instantaneous angular velocity of the motion, i.e., 
$y$
is responsible for rotation (see [11]). 
This kinematic interpretation of 
$y$ 
makes it possible to prove that the infinitesimal bending
$z$ 
is trivial if and only if the corresponding Darboux rotation field 
$y$ 
is constant (see [11]).
It is also known that the Darboux rotation field has a few nontrivial 
geometric properties. 
The interested reader can find them, e.g., in the books [4,\,12], 
the articles [7,\,13--15], and the literature listed there.
We do not use those properties, and therefore do not formulate them.

\section*{\S\,3. First-order equations for the Darboux rotation field in 
isothermal coordinates}

Throughout the rest of this article
$S$ 
is a smooth surface,
$x:U\subset\mathbb{R}^2\to S\subset\mathbb{R}^3$ 
is a parameterization of 
$S$ 
by means of {\it isothermal} coordinates
$(u,v)\in U$
(i.e., such coordinates 
$(u,v)\in U$
that 
$$
(x_u,x_u)=\lambda, \quad (x_u, x_v)=0, \quad (x_v,x_v)=\lambda,\eqno(7)
$$
where
$\lambda:U\to (0,+\infty)$ 
is a smooth function).
By definition, we put
$n={\lambda}^{-1}(x_u\times x_v)$, 
so that
$n$ 
is the unit normal vector to 
$S$.
The symbols
$h_{ij}$ 
always denote the coefficients of the second fundamental form of   
$S$; 
namely,
\begin{gather*}
h_{11}= (x_{uu},n)=-(x_u,n_u), \\
h_{12}=h_{21}=(x_{uv},n)=-(x_u,n_v)=-(x_v,n_u), \\
h_{22}=(x_{vv},n)=-(x_v,n_v).
\end{gather*}
 
The existence of isothermal coordinates (i.e., the existence
of a surface parameterization by means of isothermal coordinates) was 
first established by Gauss in 1825 for analytical surfaces [16].  
Many researchers then worked to simplify the proofs and to lower the 
requirements for the smoothness of the surface on which the existence 
of isothermal coordinates can be guaranteed [17].
In the 1950s, the use of existence theorems for the Beltrami equation [18] 
became standard, which allowed us to study generalized solutions and 
significantly reduce the requirements on the smoothness of surfaces.
But there are other approaches, e.g., based on the use of the Fourier 
transform [19].
Another example: in 1953--1963, Yu.~G.~Reshetnyak published a series of 
nine articles devoted to the construction of isothermal coordinates on 
two-dimensional manifolds of bounded curvature in the sense of 
A.~D.~Alexandrov and the application of these coordinates to the study 
of such manifolds.
An English translation of these articles was published only in 2023 
in the book [20].
Since in this article we have limited ourselves to the framework of classical 
differential geometry, the minimal smoothness of the surface is not 
important for us.
Therefore, we do not even  explicitly formulate the mentioned results 
precisely.
What is important for us is that isothermal coordinates exist and are 
determined nonlocally in the sense that they can be introduced on any 
open part of the surface homeomorphic to the disk.
In general, all other special coordinate systems traditionally used in the 
surface theory are local (we mean, first of all, the normal, polar, and 
semigeodesic coordinate systems on a surface, see, e.g., [21, \S\,3.6]).

\begin{thrm}\label{thrm1}
Suppose that $S$ 
is a smooth surface and
$x:U\subset\mathbb{R}^2\to S\subset\mathbb{R}^3$  
is a parameterization of $S$ by the isothermal coordinates
$(u,v)\in U$. 
Let
$y:U\to \mathbb{R}^3$ 
be the Darboux rotation field of
$S$, 
so that $(5)$ holds in
$U$.
Then
$y$ 
also satisfies the following relations:
\begin{gather*}
(y_u,n)=0,  
                                                                \tag8
\\ 
(y_v,n)=0, 
                                                                \tag9
\\ 
(y_u,x_u)+(y_v,x_v)=0, 
                                                                \tag{10}
\\ 
h_{21}(y_u,x_u) +h_{22}(y_u,x_v)-h_{11}(y_v,x_u) -h_{12}(y_v,x_v)=0.
                                                                \tag{11}
\end{gather*}
\end{thrm}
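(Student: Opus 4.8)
The plan is to obtain (8)--(10) from (5) by purely algebraic manipulations, and then to derive the new relation (11) by differentiating (8) and (9).

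First I would take the scalar product of the vector identity (5), $y_u\times x_v=y_v\times x_u$, successively with $x_u$, with $x_v$, and with $n$. Using the triple product rule $(a\times b,c)=(a,b\times c)$ together with the isothermal relations (7) and $x_u\times x_v=\lambda n$, the terms $(y_v\times x_u,x_u)$ and $(y_u\times x_v,x_v)$ vanish, which gives $\lambda(y_u,n)=0$ and $\lambda(y_v,n)=0$; since $\lambda>0$, these are (8) and (9). Pairing (5) with $n$ and simplifying $x_v\times n$ and $x_u\times n$ by the vector triple product identity $a\times(b\times c)=b(a,c)-c(a,b)$ (which in isothermal coordinates yields $x_v\times n=x_u$ and $x_u\times n=-x_v$) turns (5) into $(y_u,x_u)=-(y_v,x_v)$, that is, (10). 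Since $\{x_u,x_v,n\}$ is a basis of $\mathbb{R}^3$, the three scalar relations (8)--(10) are in fact equivalent to (5).

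For the additional equation (11) the point is that one more relation becomes available once differentiation is permitted. As $n\perp y_u$ everywhere by (8), differentiating $(y_u,n)=0$ in $v$ gives $(y_{uv},n)+(y_u,n_v)=0$; likewise differentiating $(y_v,n)=0$ from (9) in $u$ gives $(y_{vu},n)+(y_v,n_u)=0$. Because $y$ is a smooth vector field, $y_{uv}=y_{vu}$, so subtracting the two identities cancels the second-order terms and leaves the first-order relation $(y_u,n_v)=(y_v,n_u)$. Now I would substitute the Weingarten formulas in isothermal coordinates, $n_u=-\lambda^{-1}(h_{11}x_u+h_{12}x_v)$ and $n_v=-\lambda^{-1}(h_{12}x_u+h_{22}x_v)$, clear the factor $-\lambda^{-1}$, and use $h_{12}=h_{21}$; the result is exactly (11).

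The calculations are all routine, so there is no serious obstacle; the only point worth emphasizing --- and the reason (11) is genuinely new rather than an algebraic consequence of (8)--(10) --- is that after differentiating the orthogonality relations (8) and (9) and invoking $y_{uv}=y_{vu}$, the second-order contributions cancel, and what remains, $(y_u,n_v)=(y_v,n_u)$, is again of first order. As a cross-check one may instead use (8) and (9) to write $y_u=ax_u+bx_v$ and $y_v=cx_u+dx_v$ and read off the normal component of the Gauss decomposition of the identity $y_{uv}=y_{vu}$: its coefficient of $n$ is $ah_{12}+bh_{22}-ch_{11}-dh_{12}$, which upon multiplication by $\lambda$ is again (11).
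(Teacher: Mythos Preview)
Your proof is correct and follows essentially the same route as the paper: (8)--(10) are obtained by pairing (5) with $x_u$, $x_v$, and $n$ (the paper pairs with $x_u\times x_v=\lambda n$ and the Lagrange identity, which amounts to the same thing), and (11) is derived by differentiating (8) in $v$, (9) in $u$, subtracting, and inserting the Weingarten formulas. Your Gauss-decomposition cross-check at the end is an extra verification not in the paper, but otherwise the arguments coincide.
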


\begin{proof}
By taking the dot product of both sides of (5) with 
$x_u$, 
we get
$$
(y_u\times x_v,x_u) =(y_v\times x_u,x_u).
$$
Using properties of the mixed product and equations (7), we obtain
$-\lambda (y_u,n)=0$. 
Therefore,
$y$
satisfies (8). 
Similarly, taking the dot product of both sides of (5) with
$x_v$
we see that
$y$
satisfies (9). 
Finally, taking the dot product of both sides of (5) with
$x_u\times x_v$ 
and using the well-known formula
$(a\times b, c\times d) = (a,c)(b,d) - (a,d)(b,c)$,
we successively get
\begin{gather*}
(y_u\times x_v,x_u\times x_v) =(y_v\times x_u,x_u\times x_v), \\
(y_u, x_u)(x_v,x_v)-(y_u,x_v)(x_v,x_u) =
(y_v,x_u)(x_u,x_v)-(y_v,x_v)(x_u,x_u), \\
\lambda (y_u,x_u)=-\lambda (y_v,x_v).
\end{gather*}
Hence
$y$ 
satisfies (10).
Thus, (8)--(10) are consequences of equations (5). 
The converse is also true.
Indeed, as we have just shown, (8)--(10) mean that the vectors 
$y_u\times x_v$ 
and 
$y_v\times x_u$ 
have the same coordinates in the orthogonal basis
$x_u$, $x_v$, $x_u\times x_v$. 
Hence,
$y_u\times x_v = y_v\times x_u$.
Thus, (5) are consequences of (8)--(10).

Differentiate (8) by 
$v$
and (9) by 
$u$,
and then subtract the second expression from the first.
This yields
$(y_u, n_v)-(y_v,n_u)=0$. 
Inserting here the expressions 
$n_u=-{\lambda}^{-1}[h_{11}x_u+h_{12}x_v]$ 
and 
$n_v=-{\lambda}^{-1}[h_{21}x_u+h_{22}x_v]$
known from differential geometry, we obtain
$$
-{\lambda}^{-1}[h_{21}(y_u,x_u) + h_{22}(y_u,x_v) 
- h_{11}(y_v,x_u) - h_{12}(y_v,x_v)]=0. 
$$ 
Therefore, (11) holds.
\end{proof}
  
It follows from the proof of Theorem 1 that the system of equations (8)--(10)
is equivalent to (6). 
This fact is well known.
The reader can find it, e.g., in [7,\,13--15] and the literature therein.
The situation is different with equation (11).
As far as the author knows, it has not been found before in the theory of 
infinitesimal bendings of surfaces. 
In a sense, finding such an ``additional'' equation (11) can be compared to 
finding the first integral of a dynamical system. 
This analogy explains the author's intention to explore what new 
consequences follow from the ``extended system of equations (8)--(11).'' 
In \S\,5, as one of these consequences, we derive a maximum principle for 
the third component $y_3$ of the Darboux rotation field $y$.

To better understand equations (5) and (8)--(11), let us consider some 
examples.

\textbf{Example 1}
($S$ 
is the plane 
$x_3=0$). 
Put
$U=\mathbb{R}^2$ and $x=(u,v,0)$. 

Then
$x_u=(1,0,0)$, $x_v=(0,1,0)$, $\lambda=1$, $n=(0,0,1)$, 
and
$h_{ij}=0$ 
for all 
$i,j=1,2$. 
Equations (5) take the form
$y_{3u}=0$, $y_{3v}=0$, 
and
$y_{1u}=-y_{2v}$;
and equations (8)--(10) take the form
$y_{3u}=0$, $y_{3v}=0$, 
and
$y_{1u}+y_{2v}=0$.
This is consistent with the conclusion of Theorem~1.
On the other hand, equation (11) becomes the identity 
$0=0$
and cannot provide any new information.

\textbf{Example 2}
($S$ 
is the sphere
$x_1^2+x_2^2+x_3^2=1$ 
with the point  
$(0,0,1)$
removed). 
We parameterize
$S$
using the stereographic projection, i.e., we put
$U=\mathbb{R}^2$ and $x=(2u,2v,u^2+v^2-1)/(u^2+v^2+1)$.

Then
$$
x_u=2(-u^2+v^2+1, -2uv, 2u)/(u^2+v^2+1)^2,
$$
$$
x_v=2(-2uv, u^2-v^2+1, 2v)/(u^2+v^2+1)^2,
$$
$$
\lambda=(x_u,x_u)=(x_v,x_v)=4/(u^2+v^2+1)^2, 
\quad 
(x_u,x_v)=0,
$$
$$
n={\lambda}^{-1}(x_u\times x_v)=-x, 
\quad 
h_{11}=-(x_u,n_u)=\lambda,
$$
$$
h_{12}=h_{21}=-(x_u,n_v)=-(x_v,n_u)=0, 
\quad 
h_{22}=-(x_v,n_v)=\lambda.
$$
After reduction by nonzero factors, (6) take the form
\begin{equation*}
\begin{cases}
2vy_{2u}-(u^2-v^2+1)y_{3u}=2uy_{2v}+2uvy_{3v}, \\ 
2vy_{1u}+2uvy_{3u}=2uy_{1v}+(-u^2+v^2+1)y_{3v}, \\ 
(u^2-v^2+1)y_{1u}+2uvy_{2u}=-2uvy_{1v}-(-u^2+v^2+1)y_{2v}. 
\end{cases}\eqno(12)
\end{equation*}
Similarly, after reduction by nonzero factors, equations (8)--(10) 
take the form
\begin{equation*}
\begin{cases}
2uy_{1u}+2vy_{2u}+(u^2+v^2-1)y_{3u}=0, \\ 
2uy_{1v}+2vy_{2v}+(u^2+v^2-1)y_{3v}=0, \\ 
(-u^2+v^2+1)y_{1u}-2uvy_{2u}+2uy_{3u}-2uvy_{1v}+(u^2-v^2+1)y_{2v}+2vy_{3v}=0. 
\end{cases}
\eqno(13)
\end{equation*}
Systems (12) and (13) look different.
But Theorem~1 states that they are equivalent to each other.
Finally, we write (11) as
$$
-2uvy_{1u}+(u^2-v^2+1)y_{2u}+2vy_{3u}-(-u^2+v^2+1)y_{1v}
+2uvy_{2v}-2uy_{3v}=0. \eqno(14)
$$
Let us write (13) and (14) in the matrix form
$AY=0\in\mathbb{R}^4$, 
where
$$
A=\begin{pmatrix} 
2u & 2v & u^2+v^2-1 & 0 & 0 & 0 \\ 
0 & 0 & 0 & 2u & 2v & u^2+v^2-1 \\
-u^2+v^2+1 & -2uv & 2u & -2uv & u^2-v^2+1 & 2v \\
-2uv & u^2-v^2+1 & 2v & -(-u^2+v^2+1) & 2uv & -2u
\end{pmatrix},
$$
and the column vector
$Y$
is the transpose of the row vector
$(y_{1u},y_{2u},y_{3u}, y_{1v},y_{2v},y_{3v})$. 
Denote by 
$A_{ij}$ 
the 
$4\times 4$-matrix
obtained from 
$A$ 
by removing the
$i$th
and
$j$th
columns.
A direct calculation gives
$$
\det A_{36}=4(u^2+v^2)(u^2+v^2+1)^2,
$$
$$
\det A_{25}=(u^2+v^2+1)^2(u^2+(v-1)^2)(u^2+(v+1)^2).
$$
Hence, for every 
$(u,v)\in\mathbb{R}^2$ 
there is a
$4\times 4$-minor of  
$A$ 
with non-zero determinant.
Thus, the rank of  
$A$
is always equal to four, i.e., for every 
$(u,v)\in\mathbb{R}^2$ 
equation (14) is not a linear combination of equations (13).

Therefore we see that in the case of a sphere, it makes sense to consider the
four equations (8)--(11) rather than the three equations (6) (or, equivalently, 
than the three equations (8)--(10)). 
 
\textbf{Example 3.}
As is known [22, \S\,16.2], a helicoid (more precisely, a certain region on it) 
can be continuously deformed into a catenoid (more precisely, into the 
corresponding region on it) using the following transformation:
$$
x_t(u,v)=(\cos t \sin u \sinh v + \sin t \cos u \cosh v, 
-\cos t \cos u \sinh v + \sin t \sin u \cosh v, 
u\cos t + v \sin t ).
$$
Here
$u\in (0, 2\pi]$, $v\in \mathbb{R}$, $t=0$ 
corresponds to a helicoid, and
$t=\pi/2$
corresponds to a catenoid.
Moreover, for every 
$t$,
the surface 
$S_t$
determined by the above parameterization $x_t$, 
is minimal and is isometric to the helicoid 
$x_0$.
Obviously,
\begin{gather*}
(x_t)_u = (\cos t \cos u \sinh v - \sin t \sin u \cosh v, 
\cos t \sin u \sinh v + \sin t \cos u \cosh v, \cos t), 
\\
(x_t)_v = (\cos t \sin u \cosh v + \sin t \cos u \sinh v, 
-\cos t \cos u \cosh v + \sin t \sin u \sinh v, \sin t ),\\
\lambda_t = ((x_t)_u,(x_t)_u)=((x_t)_v,(x_t)_v)=\cosh^2 v, 
  \quad ((x_t)_u,(x_t)_v)=0,\\ 
n_t = (\cosh v)^{-1}(\cos u, \sin u, -\sinh v),\\
(x_t)_{uu} = (-\cos t \sin u \sinh v - \sin t \cos u \cosh v, 
\cos t \cos u \sinh v - \sin t \sin u \cosh v, 0),\\
(x_t)_{uv} = (\cos t \cos u \cosh v - \sin t \sin u \sinh v, 
\cos t \sin u \cosh v + \sin t \cos u \sinh v, 0),\\
(x_t)_{vv} = (\cos t \sin u \sinh v + \sin t \cos u \cosh v, 
-\cos t \cos u \sinh v + \sin t \sin u \cosh v, 0),\\
(h_t)_{11} = ((x_t)_{uu},n_t)=-\sin t, \quad 
(h_t)_{12}=(h_t)_{21}=((x_t)_{uv},n_t)=\cos t, 
\\
(h_t)_{22}=((x_t)_{vv},n_t)=\sin t.
\end{gather*}

Recall that in the formulas just given, the previously adopted agreement 
is still in effect, according to which the lower index 
$t$
does not mean differentiation by
$t$,
but points to the surface 
$S_t$.  
Now we see that, for 
$S_t$, 
equations (6) take the form
\begin{equation*}
\begin{cases}
(\sin t) y_{2u}-(-\cos t \cos u \cosh v + \sin t \sin u \sinh v)y_{3u}\\
\hskip15mm
=(\cos t) y_{2v}-(\cos t \sin u \sinh v + \sin t \cos u \cosh v)y_{3v},\\
(\cos t \sin u \cosh v + \sin t \cos u \sinh v)y_{3u}-(\sin t)y_{1u}
\\
\hskip15mm
=(\cos t \cos u \sinh v - \sin t \sin u \cosh v)y_{3v}-(\cos t)y_{1v},\\ 
(-\cos t \cos u \cosh v + \sin t \sin u \sinh v)y_{1u}
-(\cos t \sin u \cosh v + \sin t \cos u \sinh v)y_{2u}
\\
\hskip8mm
=(\cos t \sin u \sinh v + \sin t \cos u \cosh v)y_{1v}
-(\cos t \cos u \sinh v - \sin t \sin u \cosh v)y_{2v}.
\end{cases}
							\eqno(15)
\end{equation*}

Similarly, after reduction by nonzero factors, equations (8)--(10)
take the form
\begin{equation*}
\begin{cases}
(\cos u) y_{1u} +(\sin u) y_{2u}- (\sinh v) y_{3u}=0,  \\ 
(\cos u) y_{1v} +(\sin u) y_{2v}- (\sinh v) y_{3v}=0,  \\ 
(\cos t \cos u \sinh v - \sin t \sin u \cosh v) y_{1u} +
(\cos t \sin u \sinh v + \sin t \cos u \cosh v) y_{2u}
  \\
\hskip20mm
+(\cos t) y_{3u}+ (\cos t \sin u \cosh v + \sin t \cos u \sinh v)y_{1v}
  \\ 
\hskip20mm
+(-\cos t \cos u \cosh v + \sin t \sin u \sinh v) y_{2v}+(\sin t) {y_3v}=0. 
\end{cases}
							\eqno(16)
\end{equation*}
 
But Theorem~1 states that systems (15) and (16) are equivalent to each other 
in the sense that for any fixed
$t$, $u$, 
and
$v$
each equation of system~(16) is a consequence of the equations of 
system~(15), and vice versa, each equation of system~(15) is a 
consequence of the equations of system~(16).

Finally, we write equation (11) for the surface
$x_t$
as
\begin{multline*}
(h_t)_{21}(y_{1u}(x_t)_{1u}+y_{2u}(x_t)_{2u}+y_{3u}(x_t)_{3u}) +
(h_t)_{22}(y_{1u}(x_t)_{1v}
\\
+y_{2u}(x_t)_{2v}+y_{3u}(x_t)_{3v}) 
-(h_t)_{11}(y_{1v}(x_t)_{1u}+y_{2v}(x_t)_{2u}
\\
+y_{3v}(x_t)_{3u}) -
(h_t)_{12}(y_{1v}(x_t)_{1v}+y_{2v}(x_t)_{2v}+y_{3v}(x_t)_{3v}) =0. 
\end{multline*}
Inserting here the above calculated components of the vectors  
$(x_t)_u$
and 
$(x_t)_v$,
as well as the coefficients 
$(h_t)_{ij}$, $i,j=1,2$,
of the second fundamental form of the surface 
$x_t$,
after collecting similar terms, we get
$$
(\cos u\sinh v)y_{1u} + (\sin u\sinh v)y_{2u}+y_{3u} -(\sin u\cosh v)y_{1v} 
+ (\cos u\cosh v)y_{2v}- 0\cdot y_{3v} =0. \eqno(17)
$$
Let us write equations (16)--(17) in the matrix form 
$BY=0\in\mathbb{R}^4$, 
where
\begin{equation*}
B=
\begin{pmatrix}
\cos u        & \sin u        & -\sinh v & 0              & 0             
   & 0 \\ 
0             & 0             & 0        & \cos u         & \sin u        
   & -\sinh v \\
\omega_1      & \omega_2      & \cos t   & \omega_3       & \omega_4      
   & \sin t \\
\cos u\sinh v & \sin u\sinh v & 1        & -\sin u\cosh v & \cos u\cosh v 
   & 0
\end{pmatrix},
\end{equation*} 
$$
\omega_1 = \cos t \cos u \sinh v - \sin t \sin u \cosh v,
\quad 
\omega_2 = \cos t \sin u \sinh v + \sin t \cos u \cosh v,
$$
$$
\omega_3 = \cos t \sin u \cosh v + \sin t \cos u \sinh v,
\quad 
\omega_4 = -\cos t \cos u \cosh v + \sin t \sin u \sinh v,
$$
and the column vector
$Y$
is the transpose of the row vector 
$(y_{1u},y_{2u},y_{3u}, y_{1v},y_{2v},y_{3v})$. 
Denote by 
$B_{36}$ 
the 
$4\times 4$-matrix, 
obtained from
$B$ 
by removing the $3$rd and $6$th columns. 
Direct calculation yields
$\det B_{36}=-(\cosh v)^2\sin t$.  
Hence, for all 
$u\in (0, 2\pi]$, $v\in \mathbb{R}$, 
and
$t\in (0,\pi)$
the rank of matrix 
$B$  
is equal to four, i.e., equation (17) is not a linear combination of 
equations (16) for the specified values of the parameters $u$, $v$, and $t$.
 
Thus, in the case of the surface 
$S_t$
we see the same thing that we have already seen in the case of the sphere:
it makes sense to consider the four equations (8)--(11) rather than the three 
equations (6) (or, equivalently, than the three equations (8)--(10)).

\section*{\S\,4. Second-order equations for components of the Darboux 
rotation field in isothermal coordinates}

One of the ways to prove that a compact convex surface without boundary 
is rigid bases on the maximum principle for one of the components of 
the field of infinitesimal bendings
$z$ 
of the surface (see, e.g., [4--6]).
However, to do this, it is necessary to reduce the system of equations (2) 
to a single second-order equation and make sure that the latter is elliptic.
Note that it is not sufficient to check that system (2) is elliptic, since 
the maximum principle does not always hold for solutions of elliptic systems
(see, e.g., [23; 24, Ch.~2, \S\,4]).

In \S\,4, we derive from system (8)--(11) two second-order linear 
partial differential equations, each of which is satisfied by
$y_3$, 
the third component of the Darboux rotation field
$y$.
The main difference from the classical theory of infinitesimal bendings of 
surfaces is that for the third component 
$z_3$
of the field of infinitesimal bendings
$z$
it is possible to write only one second-order linear differential equation 
(see, e.g., [4,\,6]).

\begin{thrm}\label{thrm2}
Suppose that $S$ is a smooth surface, 
$x=(x_1,x_2,x_3):U\subset\mathbb{R}^2\to S\subset\mathbb{R}^3$ 
is a parameterization of
$S$
by the isothermal coordinates  
$(u,v)\in U$, 
and
$\lambda=(x_u,x_u)$.
Let
$n=(n_1,n_2,n_3)={\lambda}^{-1}(x_u\times x_v)$ 
be a unit normal vector to
$S$, 
$h_{ij}$
be the coefficients of the second fundamental form of  
$S$,
and 
$y=(y_1,y_2,y_3):U\to \mathbb{R}^3$ 
be a Daurboux rotation field of
$S$.
If none of the three functions 
$n_1$, $n_2$,
and
$h_{11}x_{3v}^2-2h_{12}x_{3u}x_{3v}+h_{22}x_{3u}^2$
vanishes at any point in the domain
$U$,
then
$y_3$ 
satisfies the following two linear partial differential equations of order 
at most two:
\begin{gather*}
h_{22} y_{3uu} + h_{11} y_{3vv} + {\rho}_1y_{3u} + {\rho}_2y_{3v} = 0, 
                                                                   \tag{18}
\\ 
h_{12}y_{3uv} + {\rho}_3y_{3u} + {\rho}_4y_{3v} = 0
                                                                \tag{19}
\end{gather*}
everywhere in 
$U$. 
Here
${\rho}_i$, 
with
$i=1,\dots, 4$, 
are some smooth functions, possibly depending on 
$x=x(u,v)$
and its derivatives, but independent of
$y=(y_1,y_2,y_3)$
and its derivatives.
\end{thrm}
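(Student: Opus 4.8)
The plan is to start from the four first‑order equations (8)--(11) and eliminate the unknowns $y_1$, $y_2$ (and their derivatives) so that only $y_3$ and its derivatives survive. The key observation is that (8) and (9) are two linear equations in the six quantities $y_{iu}$, $y_{iv}$; writing them out,
\[
n_1 y_{1u} + n_2 y_{2u} + n_3 y_{3u} = 0, \qquad
n_1 y_{1v} + n_2 y_{2v} + n_3 y_{3v} = 0,
\]
and since by hypothesis $n_1\neq 0$ and $n_2\neq 0$ everywhere, I can solve (8) for $y_{1u}$ in terms of $y_{2u}$, $y_{3u}$ (say), and solve (9) for $y_{1v}$ in terms of $y_{2v}$, $y_{3v}$. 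This reduces the effective unknowns to $y_2$ and $y_3$ (and their first derivatives), and equations (10) and (11) become two first‑order relations among $y_{2u}, y_{2v}, y_{3u}, y_{3v}$ with coefficients smooth in $x$ and its derivatives. Concretely, after substitution I expect (10) and (11) to take a form like
\[
a_1 y_{2u} + a_2 y_{2v} = b_1 y_{3u} + b_2 y_{3v}, \qquad
c_1 y_{2u} + c_2 y_{2v} = d_1 y_{3u} + d_2 y_{3v},
\]
with $a_i,b_i,c_i,d_i$ explicit in terms of $n_j$, $x_{ju}$, $x_{jv}$, $h_{ij}$.

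Next I would treat this $2\times 2$ linear system for the pair $(y_{2u}, y_{2v})$. The determinant of the coefficient matrix $\begin{pmatrix} a_1 & a_2\\ c_1 & c_2\end{pmatrix}$ is, up to nonzero factors, exactly the quantity $h_{11}x_{3v}^2 - 2h_{12}x_{3u}x_{3v} + h_{22}x_{3u}^2$ appearing in the hypothesis; this is the reason for assuming it never vanishes. (I would verify this identification by a direct computation, using the isothermal relations (7), the Weingarten formulas $n_u = -\lambda^{-1}(h_{11}x_u + h_{12}x_v)$, $n_v = -\lambda^{-1}(h_{21}x_u + h_{22}x_v)$, and the cross‑product identity already used in the proof of Theorem~1.) With this determinant nonzero, I can solve for $y_{2u}$ and $y_{2v}$ explicitly as linear combinations of $y_{3u}$ and $y_{3v}$ with smooth coefficients, say $y_{2u} = \mu_1 y_{3u} + \mu_2 y_{3v}$ and $y_{2v} = \mu_3 y_{3u} + \mu_4 y_{3v}$; similarly $y_{1u}, y_{1v}$ are such combinations via (8)--(9).

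Now comes the step that produces the two second‑order equations: the compatibility condition $(y_{2u})_v = (y_{2v})_u$ (and likewise $(y_{1u})_v = (y_{1v})_u$). Differentiating $y_{2u} = \mu_1 y_{3u} + \mu_2 y_{3v}$ by $v$ and $y_{2v} = \mu_3 y_{3u} + \mu_4 y_{3v}$ by $u$ and subtracting gives
\[
\mu_1 y_{3uv} + \mu_2 y_{3vv} - \mu_3 y_{3uu} - \mu_4 y_{3uv} + (\text{first‑order terms in } y_3) = 0,
\]
that is, a single second‑order equation in $y_3$ with the coefficient of $y_{3uu}$ equal to $-\mu_3$, of $y_{3vv}$ equal to $\mu_2$, and of $y_{3uv}$ equal to $\mu_1 - \mu_4$. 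The analogous compatibility condition coming from $y_1$ yields a second such equation. I expect that after clearing the common nonzero denominators (powers of $\lambda$, $n_1$, $n_2$, and the discriminant above) and taking suitable $\mathbb{R}$‑linear combinations of these two second‑order equations, one can arrange them into precisely the stated normal forms: one in which the mixed derivative $y_{3uv}$ cancels, leaving $h_{22}y_{3uu} + h_{11}y_{3vv} + \rho_1 y_{3u} + \rho_2 y_{3v} = 0$, and one in which the pure second derivatives cancel, leaving $h_{12}y_{3uv} + \rho_3 y_{3u} + \rho_4 y_{3v} = 0$. Identifying the leading coefficients with $h_{22}$, $h_{11}$, $h_{12}$ is a matter of bookkeeping using the Weingarten relations.

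The main obstacle I anticipate is purely computational: carrying out the elimination of $y_1$ and $y_2$ and then simplifying the resulting second‑order coefficients down to the clean symbols $h_{11}$, $h_{12}$, $h_{22}$ requires a careful, somewhat lengthy manipulation of cross products and the second fundamental form in isothermal coordinates, and it is easy to lose a factor of $\lambda$ or a sign. A secondary point requiring care is bookkeeping the non‑vanishing hypotheses: $n_1\neq 0$ and $n_2\neq 0$ are needed to solve (8)--(9) for $y_{1u},y_{1v}$, and the non‑vanishing of $h_{11}x_{3v}^2 - 2h_{12}x_{3u}x_{3v} + h_{22}x_{3u}^2$ is needed so that the $2\times 2$ system for $(y_{2u},y_{2v})$ is invertible; I would check at each division that the quantity I divide by is, up to an explicitly nonzero factor, one of these three. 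Finally, I should remark that the equations are ``of order at most two'' rather than exactly two because the leading coefficients $h_{ij}$ may vanish at some points (as Examples~4--6 are meant to illustrate), so no ellipticity is claimed here — that is addressed separately in \S\,5.
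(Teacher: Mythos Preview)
Your plan is correct and matches the paper's proof: the paper writes (8)--(11) as a single $4\times 4$ linear system $DX=Z$ in $X=(y_{1u},y_{2u},y_{1v},y_{2v})^T$, observes that $\det D=h_{11}x_{3v}^2-2h_{12}x_{3u}x_{3v}+h_{22}x_{3u}^2$, solves by Cramer's rule, imposes the two compatibility conditions $(y_{1u})_v=(y_{1v})_u$ and $(y_{2u})_v=(y_{2v})_u$, and then adds and subtracts the resulting pair of second-order equations (after dividing them by $-n_2$ and $n_1$ respectively --- this, not the initial solve, is where those nonvanishing hypotheses are actually used) to obtain (18) and (19). The only cosmetic difference is that the paper applies Cramer's rule to the full $4\times4$ system rather than your two-step elimination.
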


\begin{proof}
The following formulas are required below; they can be easily verified
by direct calculation:
\begin{equation*}
\begin{matrix}
\lambda n=x_u\times x_v,   
 & \text{i.e.,} & \lambda n_1=x_{2u}x_{3v}-x_{2v}x_{3u}, \\ 
 & & \lambda n_2=x_{1u}x_{3v}-x_{1v}x_{3u}, \\ 
 & & \lambda n_3=x_{1u}x_{2v}-x_{1v}x_{2u}; \\ 
x_u=x_v\times n,   
 & \text{i.e.,} & x_{1u}=n_3x_{2v}-n_2x_{3v}, \\ 
 & & x_{2u}=n_1x_{3v}-n_3x_{1v}, \\ 
 & & x_{3u}=n_2x_{1v}-n_1x_{2v}; \\ 
x_v=n\times x_u, 
 & \text{i.e.,} & x_{1v}=n_2x_{3u}-n_3x_{2u}, \\ 
 & & x_{2v}=n_3x_{1u}-n_1x_{3u}, \\ 
 & & x_{3v}=n_1x_{2u}-n_2x_{1u}. 
 \end{matrix}\eqno(20)
\end{equation*}  

Let us write equations (8)--(11) in the matrix form 
$DX=Z$, 
where
\begin{equation*}
D=\begin{pmatrix}
n_1                       & n_2                       
      & 0                            & 0 \\ 
0                         & 0                         
      & n_1                          & n_2 \\
x_{1u}                    & x_{2u}                    
      & x_{1v}                       & x_{2v} \\
h_{12}x_{1u}+h_{22}x_{1v} & h_{12}x_{2u}+h_{22}x_{2v} 
      & -(h_{11}x_{1u}+h_{12}x_{1v}) & -(h_{11}x_{2u}+h_{12}x_{2v})
\end{pmatrix}, 
			\eqno(21)
\end{equation*}
\begin{equation*}
X=\begin{pmatrix}
y_{1u} \\
y_{2u} \\
y_{1v} \\
y_{2v}
\end{pmatrix}, 
\quad
Z=\begin{pmatrix}
-n_3y_{3u} \\
-n_3y_{3v} \\
-(x_{3u}y_{3u}+x_{3v}y_{3v}) \\
-(h_{12}x_{3u}+h_{22}x_{3v})y_{3u}+(h_{11}x_{3u}+h_{12}x_{3v})y_{3v}
\end{pmatrix}.\eqno(22)
\end{equation*}
Put 
$d=\det D$. 
Using the Laplace expansion along the last row and repeatedly
using (20), we obtain
$$
d = h_{11}x_{3v}^2-2h_{12}x_{3u}x_{3v}+h_{22}x_{3u}^2.
$$
So, the conditions of Theorem~2 imply
$d\neq 0$.
Applying Cramer's rule to the system
$DX=Z$, 
we obtain
$$
y_{1u}=d_1/d, \quad
y_{2u}=d_2/d, \quad
y_{1v}=d_3/d, \quad
y_{2v}=d_4/d. \eqno(23)
$$
Here
$d_i$
stands for the determinant of the matrix 
$D_i$, 
that results from 
$D$ 
in (21) by replacing the $i$th column by the column vector 
$Z$  
of (22). 
For every
$i=1,\dots, 4$, 
put 
$d_i =p_iy_{3u}+q_iy_{3v}$, 
where
$p_i$ 
and 
$q_i$ 
are independent of 
$y$ 
and its derivatives.
Applying to 
$D_i$ 
the Laplace expansion along the last row and repeatedly using (20), we obtain
\begin{equation*}
\begin{cases}
p_1=h_{11}x_{1v}x_{3v} - 2h_{12}x_{1v}x_{3u} + h_{22}x_{1u}x_{3u},    
\qquad 
    q_1= h_{11}n_2\omega, 
\\
p_2=h_{11}x_{2v}x_{3v} - 2h_{12}x_{2v}x_{3u} + h_{22} x_{2u}x_{3u},   
\qquad 
	q_2=-h_{11}n_1\omega, 
\\
p_3= -h_{22}n_2\omega, 
\qquad 
q_3=h_{11}x_{1v}x_{3v} - 2h_{12}x_{1u}x_{3v} + h_{22}x_{1u}x_{3u}, 
\\
p_4= h_{22}n_1\omega, 
\qquad 
q_4=h_{11}x_{2v}x_{3v} - 2h_{12}x_{2u}x_{3v} + h_{22}x_{2u}x_{3u},
\end{cases}
							\eqno(24)
\end{equation*}                                
where
$\omega= \lambda n_3^2 + x_{3u}^2 + x_{3v}^2$. 
Note that 
$\omega>0$ 
in
$U$. 
 
The first and third formulas in (23) can be written as
$y_{1u}d-d_1=0$ 
and
$y_{1v}d-d_3=0$. 
Differentiating the first relation by
$v$
and the last relation by 
$u$ 
and subtracting the second relation from the first one, we obtain
$y_{1u}d_v - y_{1v}d_u - d_{1v} + d_{3u} = 0$. 
In the resulting formula, we insert 
$d_i =p_iy_{3u}+q_iy_{3v}$ 
and perform differentiations.
After simplification, we get
\begin{multline*} 
p_3y_{3uu} + (q_3-p_1)y_{3uv} -q_1y_{3vv} 
+ [p_{3u} - p_{1v} + (p_1d_v - p_3d_u)/d]y_{3u} 
\\
+ [q_{3u} - q_{1v} + (q_1d_v - q_3d_u)/d]y_{3v}=0. \tag{25}
\end{multline*}
Next, we transform (25) by replacing 
$p_1$, $p_3$, $q_1$, 
and 
$q_3$
in the coefficients at the second derivatives of 
$y_3$
with the corresponding expressions of (24). 
After simplification and division by 
$(-n_2)$, 
we get
$$ 
\omega h_{22} y_{3uu} + 2\lambda h_{12}y_{3uv} + \omega h_{11} y_{3vv} +
r_1y_{3u} + r_2y_{3v} = 0, \eqno(26)
$$
where
$r_1=(p_{1v}-p_{3u})/n_2+(p_3d_u-p_1d_v)/(n_2d)$,
$r_2=(q_{1v}-q_{3u})/n_2+(q_3d_u-q_1d_v)/(n_2d)$.

Similarly, we write the second and fourth formulas in (23) as
$y_{2u}d-d_2=0$ 
and 
$y_{2v}d-d_4=0$. 
Differentiating the first relation by
$v$
and the last relation by 
$u$ 
and subtracting the second relation from the first, we obtain
$y_{2u}d_v - y_{2v}d_u - d_{2v} + d_{4u} = 0$.
In the resulting formula, we insert 
$d_i =p_iy_{3u}+q_iy_{3v}$ 
and perform differentiations.
After simplification, we get
\begin{multline*}
p_4y_{3uu} + (q_4-p_2)y_{3uv} - q_2y_{3vv}  
+ [p_{4u} - p_{2v} + (p_2d_v - p_4d_u)/d]y_{3u} 
\\
+ [q_{4u}-q_{2v} + (q_2d_v - q_4d_u)/d]y_{3v}=0. \tag{27}
\end{multline*}
Next, we transform (27) by replacing 
$p_2$, $p_4$, $q_2$, 
and
$q_4$ 
in the coefficients at the second derivatives of 
$y_3$
with the corresponding expressions of (24).
After simplification and division by 
$n_1$, 
we get 
$$
\omega h_{22} y_{3uu} - 2\lambda h_{12}y_{3uv} + \omega h_{11} y_{3vv} +
r_3y_{3u} + r_4y_{3v} = 0,\eqno(28)
$$
where
$r_3= (p_{4u} - p_{2v})/n_1 + (p_2d_v - p_4d_u)/(n_1d)$,
$r_4= (q_{4u} - q_{2v})/n_1 + (q_2d_v - q_4d_u)/(n_1d)$. 

Adding (26) and (28) and dividing the result by 
$2\omega$,
we obtain (18), where 
${\rho}_1=(r_1+r_3)/(2\omega)$ 
and
${\rho}_2=(r_2+r_4)/(2\omega)$.
By subtracting (28) from (26) and dividing the result by 
$4\lambda$,
we obtain (19), where
${\rho}_3=(r_1-r_3)/(4\lambda)$ 
and
${\rho}_4=(r_2-r_4)/(4\lambda)$.
\end{proof}

For the first and second components 
$y_1$
and
$y_2$ 
of the Darboux rotation field
$y$,
equations similar to (18) and (19) can be obtained in the same way, 
as equations (18) and (19) were obtained in the proof of Theorem~2.
We do not intend to use the equations for 
$y_1$
and
$y_2$ 
in this article, so we do not write them out here to avoid deviating from 
the main line of reasoning.

Let us see what equations (18) and (19) look like for the surfaces 
considered in Examples~1--3 of \S\,3.

\textbf{Example 4}
(a continuation of Example 1, in which the plane 
$x=(u,v,0)$
was considered).
In this case, we cannot even write equations (18) and (19).
Indeed, in this case 
$n_1$ 
is identically zero.
Therefore, Theorem~2 does not apply.

\textbf{Example 5}
(a continuation of Example 2, in which the sphere
$x_1^2+x_2^2+x_3^2=1$
parameterized by the stereographic projection was considered).
In this case, the line 
$n_1=0$ 
is given by 
$v=0$; 
the line
$n_2=0$ 
is given by 
$u=0$; 
and the equation
$$
h_{11}x_{3v}^2-2h_{12}x_{3u}x_{3v}+h_{22}x_{3u}^2=0
$$
takes the form
$$
16\lambda (u^2+v^2)/(u^2+v^2+1)^4=0
$$ 
and defines the single point
$u=v=0$.
Therefore, in each quadrant 
$K_i$, 
with
$i=1,\dots, 4$,
defined by the coordinate axes 
$u$, $v$, 
we can write (18) and (19).
In this case, (18) takes the form
$y_{3uu} + y_{3vv} + (\rho_1/\lambda)y_{3u} + (\rho_2/\lambda)y_{3v} = 0$,
i.e., it is an elliptic second-order linear partial differential equation.
So, according to the maximum principle, the maximum of 
$y_3$ 
on the closure of
$K_i$ 
is attained on the boundary of 
$K_i$.
Finally, we note that (19) takes the form 
${\rho}_3y_{3u} + {\rho}_4y_{3v} = 0$, 
and thus is not a second-order partial differential equation at all.

\textbf{Example 6}
(a continuation of Example 3, in which a continuous family of isometric 
surfaces
$S_t$
was considered containing the helicoid and the catenoid).
The line
$(n_t)_1=0$ 
is given by the equaition
$\cos u=0$ 
(thus
$u=\pi/2+\pi j$, $j\in{\mathbb Z}$);
the line
$(n_t)_2=0$ 
is given by the equation
$\sin u=0$ 
(thus
$u=\pi k$, $k\in{\mathbb Z}$);
the equation
$$
(h_t)_{11}(x_t)_{3v}^2-2(h_t)_{12}(x_t)_{3u}(x_t)_{3v}
+(h_t)_{22}(x_t)_{3u}^2=0
$$ 
takes the form
$-2\cos^2 t\sin t=0$ 
(thus it has no solutions  
$t\in (0,\pi/2)$).
Therefore, for each 
$t\in (0,\pi/2)$
in each subdomain of the domain
$U$
disjoint with all straight lines
$u=k\pi/2$, 
we can write equations (18) and (19).
Under these conditions, (18) takes the form
$$
(\sin t)y_{3uu} - (\sin t) y_{3vv} + \rho_1 y_{3u} + \rho_2 y_{3v} =0
$$
and (19) takes the form
$$
(\cos t)y_{3uv}+ {\rho}_3y_{3u} + {\rho}_4y_{3v} = 0,
$$
i.e., for 
$t\in(0,\pi/2)$,
both equations are second-order hyperbolic linear partial differential 
equations without common characteristic lines.

Examples 4--6 show that for some surfaces equations (18) and (19) degenerate, 
but for some others they are essentially different second-order equations.
The existence of two equations for 
$y_3$ 
(i.e., for the third component of the Darboux rotation field) is a new fact 
in the theory of infinitesimal bendings of smooth surfaces in 
$\mathbb{R}^3$.
The problem of using this fact is interesting, but it goes beyond the scope 
of this article.

\section*{\S\,5. The maximum principle for the components of the Darboux 
rotation field of surfaces with not necessarily positive curvature}

\begin{thrm}\label{thrm3}
Suppose that
$S$ 
is a smooth surface, 
$x=(x_1,x_2,x_3):U\subset\mathbb{R}^2\to S\subset\mathbb{R}^3$ 
is a parameterization of $S$ by the isothermal coordinates  
$(u,v)\in U$, 
and
$\lambda=(x_u,x_u)$.
Let 
$n=(n_1,n_2,n_3)={\lambda}^{-1}(x_u\times x_v)$ 
be a unit normal vector to
$S$, 
let 
$h_{ij}$ 
be the coefficients of the second fundamental form of  
$S$,
and let
$y=(y_1,y_2,y_3):U\to \mathbb{R}^3$ 
be a Daurboux rotation field of
$S$.
If 
\begin{equation*}
n_1\neq 0, \quad n_2\neq 0, \quad
h_{11}x_{3v}^2-2h_{12}x_{3u}x_{3v}+h_{22}x_{3u}^2\neq 0,\eqno(29)
\end{equation*}
\begin{equation*} 
h_{22}h_{11} > 0\eqno(30)
\end{equation*}
in 
$U$,
then the maximum of 
$y_3$ 
on the closure of 
$U$ 
is attained on the boundary of 
$U$. 
\end{thrm}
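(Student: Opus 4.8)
The plan is to reduce the assertion to the classical weak maximum principle, applied to the single second-order equation (18) of Theorem~2, which under the present hypotheses is elliptic and carries no zeroth-order term. First I would note that (29) is word for word the hypothesis of Theorem~2: it says that none of the functions $n_1$, $n_2$, and $h_{11}x_{3v}^2-2h_{12}x_{3u}x_{3v}+h_{22}x_{3u}^2$ vanishes in $U$. Hence Theorem~2 applies and $y_3$ satisfies, throughout $U$, the equation (18),
\[
h_{22}y_{3uu}+h_{11}y_{3vv}+\rho_1y_{3u}+\rho_2y_{3v}=0,
\]
in which $\rho_1,\rho_2$ are smooth functions of $x$ and its derivatives, not involving $y$. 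The two features of (18) that matter here are that its principal part carries no mixed derivative $y_{3uv}$, and that it contains no term proportional to $y_3$ itself.

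The next step is to read off the ellipticity of (18) from (30). For the principal part $h_{22}\partial_{uu}+h_{11}\partial_{vv}$ of (18) the ellipticity discriminant equals $h_{22}h_{11}-0^2=h_{11}h_{22}$, so (30) says precisely that (18) is elliptic at every point of $U$. Moreover $h_{11}h_{22}>0$ forces $h_{11}$ and $h_{22}$ to be nowhere zero and, being continuous on the connected set $U$ (recall that $S$, hence $U$, is assumed connected), to keep one and the same sign throughout $U$; replacing (18) by its negative if that sign is negative, we may assume $h_{11}>0$ and $h_{22}>0$ in $U$. Then (18) has the form $a_{11}y_{3uu}+a_{22}y_{3vv}+b_1y_{3u}+b_2y_{3v}=0$ with $a_{11}=h_{22}>0$, $a_{22}=h_{11}>0$, with smooth (hence locally bounded) coefficients $b_1=\rho_1$ and $b_2=\rho_2$, and with vanishing zeroth-order coefficient.

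Finally I would invoke the classical weak maximum principle for second-order elliptic equations without a zeroth-order term: if $U$ is a bounded domain and $u\in C^2(U)\cap C^0(\overline U)$ satisfies such an equation in $U$, then $\max_{\overline U}u=\max_{\partial U}u$. Applying this with $u=y_3$ --- which is smooth in $U$ because $S$ and $y$ are smooth, and is continuous on $\overline U$ as is tacitly assumed when one speaks of ``the maximum of $y_3$ on the closure of $U$'' --- yields the conclusion of Theorem~3. (The same argument gives that the minimum of $y_3$ is likewise attained on $\partial U$, and, via the strong maximum principle, that a nonconstant $y_3$ attains neither its maximum nor its minimum at an interior point.)

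I do not expect a genuine obstacle at this stage; the real content is already contained in Theorems~1 and~2. The points that need care are bookkeeping: one must use (30) in full, both as the ellipticity condition for (18) (the discriminant inequality) and to ensure that the leading coefficients $h_{11},h_{22}$ are non-vanishing and of a common sign, so that neither division nor a sign reversal introduces a defect; and one must respect the standard domain hypotheses of the maximum principle --- boundedness of $U$ (or passage to bounded subdomains, as in Example~5) and continuity of $y_3$ up to the boundary. It is worth emphasizing that the absence of a zeroth-order term in (18), inherited from the structure established in Theorem~2, is exactly what makes the unrestricted maximum principle available, with no sign condition imposed on $y_3$.
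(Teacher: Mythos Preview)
Your proposal is correct and follows exactly the paper's own route: invoke Theorem~2 via hypothesis~(29) to obtain equation~(18), observe that~(30) makes~(18) elliptic, and apply the standard maximum principle. You supply more detail than the paper does (the sign normalization of $h_{11},h_{22}$, the absence of a zeroth-order term, the domain hypotheses), but the argument is the same.
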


\begin{proof}
Owing to (29),
$y_3$ 
satisfies (18), i.e., 
$$
h_{22} y_{3uu} + h_{11} y_{3vv} + {\rho}_1y_{3u} + {\rho}_2y_{3v} = 0.
$$
By virtue of (30), the latter equation is elliptic. 
Therefore, the claim of Theorem 3 follows from the standard formulation 
of the maximum principle for elliptic equations (see, e.g., 
[24; 25, Part~2, \S\,2.2]). 
\end{proof}

It is clear that the third component 
$y_3$
of the Darboux rotation field
$y$ 
was considered in Theorem 3 solely for definiteness; statements similar to 
Theorem 3, of course, are valid for other components of the Darboux rotation 
field~
$y$.

Theorem 3 implies that if (30) holds on a parameterized surface
$x=(x_1,x_2,x_3):U\subset\mathbb{R}^2\to S\subset\mathbb{R}^3$,
then the maximal and minimal values of the function 
$y_3$
cannot be attained anywhere except on the boundary of the surface or 
on the lines defined by one of the equations
$$
n_1=0, \qquad n_2=0, \qquad 
h_{11}x_{3v}^2-2h_{12}x_{3u}x_{3v}+h_{22}x_{3u}^2=0.
$$ 
Thus, on some surfaces of not necessarily positive curvature (namely, 
those on which inequality (30) holds), we are able to construct lines
on which alone the maximal and minimal values of the function 
$y_3$
can be attained. 
Previously, such lines had not been encountered in the theory of 
infinitesimal bendings of smooth surfaces in $\mathbb{R}^3$.
From our point of view, they are similar to the nodal lines of standing 
waves (see, e.g., [26, Ch.~V, \S\,3]) and deserve a separate study.

However, we note that if the surface 
$S$
not only satisfies (30) (i.e., the inequality 
$h_{11}h_{22}>0$),
but also has everywhere it has positive Gaussian curvature (i.e., 
$h_{11}h_{22}-h^2_{12}>0$
on 
$S$), 
then the maximum principle for
$y_3$
holds without additional restrictions~(29).
This is immediate from Corollary to Theorem~2 in~[7], which states that 
the rotation diagram
$Y$ 
any nontrivial infinitesimal bending
$z$
of a surface of positive curvature at any internal point (i.e., a point that 
lies outside the boundary of the surface) does not have a local supporting 
plane.
Recall that the {\it rotation diagram}
$Y$
of an infinitesimal bending
$z$
is the set of endpoints of the vectors of the corresponding Darboux rotation 
field 
$y$
originated at some fixed point, e.g., at the origin.
The statement that if the Gaussian curvature of 
$S$
is positive, then at every regular point of 
$Y$
(i.e., at each point subject to $y_u\times y_v\neq 0$)
the rotation diagram 
$Y$
has strictly negative curvature (and, thus, 
$Y$ 
has no supporting plane at the point) was known to the 
classics (see, e.g., [11, \S\,5]).
Thus, the essence of Corollary to Theorem 2 in [7] is in the fact that if 
the Gaussian curvature of 
$S$
is positive, then there is no local supporting plane at every singular point of 
the rotation diagram
$Y$.
For this purpose, topological properties of Sto\"{\i}low-inner mappings
(i.e., open and light mappings; see [27]) are used in [7].

\subsection*{Funding}
The research was carried out within the State Task to the Sobolev Institute of 
Mathematics (Project FWNF--2022--0006).

\section*{References}
{\small 

\noindent{[1]} {\sl Euleri L.}
Fragmentum 97 // 
In: A. Speiser (ed.). Leonhardus Eulerus. Opera omnia, Ser. 1. 
Opera mathematica. V. 13. Commentationes geometricae. V. 4.
Lausannae: Auctoritate et impensis Societatis Scientiarum Naturalium 
Helveticae (1956). P. 437--440.

\noindent{[2]} {\sl Gauss K. F.}
General investigations of curved surfaces.
Hewlett, New York: Raven Press (1965).
[Translation from the Latin and German.] 

\noindent{[3]} {\sl Ivanova-Karatopraklieva I.} and {\sl Sabitov I. Kh.}
Surface deformation. I. 
J. Math. Sci., New York. 1994. V. 70, no. 2, 1685--1716.
[Translation from the Russian.]

\noindent{[4]} {\sl Klimentov~S.~B.}
An introduction to the bending theory. Two-dimensional surfaces 
in three-dimensional Euclidean space. [In Russian].
Rostov-on-Don: Publishing House of the Southern Federal University, 2014. 
[In~electronic form, the book is available at
\href{https://elibrary.ru/item.asp?id=24164017}{https://elibrary.ru/item.asp?id=24164017}.]

\noindent{[5]} {\sl Bojarskij B. V.} and {\sl  Efimov N. V.}
The maximum principle for infinitesimal bendings of piecewise regular
convex surfaces. [In Russian]. 
Uspekhi Mat. Nauk. 1959. V. 14, no. 6, 147--153. 

\noindent{[6]} {\sl Pogorelov A. V.}
Extrinsic geometry of convex surfaces. 
Providence, R.I.: American Mathematical Society (1973).
[Translation from the Russian.]

\noindent{[7]} {\sl Sabitov I. H.}
Local structure of Darboux surfaces.
Sov. Math., Dokl. 1965. V. 6, 804--807.
[Translation from the Russian.]

\noindent{[8]} {\sl Yau S.-T.} 
Problem section. In: S.-T. Yau (ed.). Seminar on differential geometry.
Ann. Math. Stud. 1982. V. 102, 669--706.

\noindent{[9]} {\sl Reshetnyak Yu. G.} 
Nonrigid surfaces of revolution. [In Russian]. 
Sib. Mat. Zh. 1962. V. 3, no. 4, 591--604.

\noindent{[10]} {\sl Trotsenko D. A.} 
Nonrigid analytic surfaces of revolution. 
Siberian Math. J. 1980. V. 21, no. 5, 718--724.
[Translation from the Russian.]

\noindent{[11]} {\sl Cohn-Vossen S. {\`E}.} 
Bending of surfaces in the large. [In Russian].
Uspekhi Mat. Nauk. 1936, issue 1, 33--76.

\noindent{[12]} {\sl Darboux G.} 
Le\,cons sur la th{\'e}orie g{\'e}n{\'e}rale des surfaces et les applications
g{\'e}om{\'e}triques du calcul infinit{\'e}simal. T. IV: 
D{\'e}formation infiniment petite et repr{\'e}sentation sph{\'e}rique.
Paris: Gauthier-Villars (1896).
 
\noindent{[13]} {\sl Rembs E.} 
Verbiegungen h\"oherer Ordnung und ebene Fl\"achenrinnen.
Math. Z. 1933. Bd. 36. S. 110--121. 

\noindent{[14]} {\sl Efimov N. V.} 
Qualitative problems of the theory of deformation of surfaces.
[In Russian].
Uspekhi Mat. Nauk. 1948. V. 3, issue 2, 47--158.

\noindent{[15]} {\sl Alexandrov V.} 
New manifestations of the Darboux's rotation and translation fields of 
a surface. New Zealand Journal of Mathematics. 2010. V. 40, 59--65. 

\noindent{[16]} {\sl Gauss C.F.}
On conformal representation. In: D.E. Smith (ed.).
A source book in mathematics. V. 2. P. 463--475.
New York: Dover Publications, 1959. 

\noindent{[17]} {\sl Chern S.-S.}
An elementary proof of the existence of isothermal parameters on a surface.
Proc. Am. Math. Soc. 1955. V. 6, 771--782. 

\noindent{[18]} {\sl Vekua I. N.}
Generalized analytic functions.
Oxford etc.: Pergamon Press (1962).
[Translation from the Russian.]

\noindent{[19]} {\sl Douady A.} 
Le th{\'e}or{\`e}me d'int{\'e}grabilit{\'e} des structures presque complexes 
(d’apr{\`e}s des notes de X. Buff) // In: T. Lei (ed.). 
The Mandelbrot set, theme and variations.
Lond. Math. Soc. Lect. Note Ser. 2000. V. 274. P. 307--324.
Cambridge: Cambridge University Press. 

\noindent{[20]} {\sl Fillastre F.} and {\sl Slutskiy D.} (eds.).
Reshetnyak’s theory of subharmonic metrics. 
Cham: Springer (2023). 

\noindent{[21]} {\sl Toponogov V. A.}
Differential geometry of curves and surfaces. 
Basel: Birkh{\"a}user (2005). 
[Translation from the Russian.]

\noindent{[22]} {\sl Gray A., Abbena E.} and {\sl Salamon S.}
Modern differential geometry of curves and surfaces with Mathematica. 3rd ed. 
Boca Raton: CRC (2006).

\noindent{[23]} {\sl Maz'ya V. G.} and {\sl Kresin G. I.}
On the maximum principle for strongly elliptic and parabolic second order 
systems with constant coefficients. Math. USSR-Sb. 1986. V. 53, no. 2, 457--479.
[Translation from the Russian.]

\noindent{[24]} {\sl Bitsadze A. V.}
Boundary value problems for second order elliptic equations. 
Amsterdam: North-Holland Publishing Company (1968). 
[Translation from the Russian.]

\noindent{[25]} {\sl Bers L., John F.} and {\sl Schechter M.}
Partial differential equations.
Providence, R.I.: American Mathematical Society (1979).

\noindent{[26]} {\sl Tikhonov A. N.} and {\sl Samarskii A. A.}
Equations of mathematical physics. Oxford etc.: Pergamon Press (1963).  
[Translation from the Russian.] 

\noindent{[27]} {\sl Sto\"{\i}low S.}
Le\c{c}ons sur les principes topologiques de la th{\'e}orie des fonctions analytiques.
Paris: Gauthier-Villars (1956). 

}

\end{document}